\renewcommand{\S}{\mathcal{S}}
\newcommand{\Sn}{\mathcal{S}_n}
\newtheorem{theorem}{Theorem}[section]
\newtheorem{conjecture}[theorem]{Conjecture}
\newtheorem{lemma}[theorem]{Lemma}
\theoremstyle{definition}
\newtheorem{definition}[theorem]{Definition}
\newtheorem{example}[theorem]{Example}
\theoremstyle{remark}
\date{\today}
\title{The Linear Relaxation of an Integer Program for the Union-Closed Conjecture}
\author{Brianna Amaral, Lucien Dalton, Drew Polakowski, \\ Annie Raymond, and Bertram Thomas}
\address{Department of Mathematics and Statistics,
Lederle Graduate Research Tower, 1623D,
University of Massachusetts Amherst
710 N. Pleasant Street
Amherst, MA 01003} \email{raymond@math.umass.edu}
\thanks{This paper is the result of an undergraduate research experience project at the University of Massachusetts, Amherst.} 
\begin{document}

\begin{abstract}
The Frankl conjecture, also known as the union-closed sets conjecture, states
that in any finite non-empty union-closed family, there exists an element in at least
half of the sets. Let $f(n,a)$ be the maximum number of sets in a union-closed family on a ground
set of $n$ elements where each element is in at most $a$ sets for some $a,n\in \mathbb{N}^+$. 
Proving that $f(n,a)\leq 2a$ for all $a, n \in \mathbb{N}^+$ is equivalent to proving the Frankl conjecture.
By considering the linear relaxation of the integer programming formulation that was proposed in \cite{PRT},
we prove that $O(a^2)$ is an upper bound for $f(n,a)$. We also provide different ways that this result could be strengthened. Additionally, we give a new proof that $f(n,2^{n-1}-1)=2^n-n$.
\end{abstract}

\maketitle

The union-closed sets conjecture was popularized by P\'eter Frankl in the late 1970's (\cite{Frankl83}), and is thus often referred to as the Frankl conjecture.

Throughout this paper, we think of $\mathcal{S}_n$ as being the power set $2^{[n]}$ where $[n]=\{1,2,\ldots, n\}$, and of $\mathcal{S}\subseteq \mathcal{S}_n$ as being a collection of distinct subsets of $[n]$ for some $n\in \mathbb{N}^+$. We say that $\mathcal{S}$ is union-closed if the union of any two sets in $\mathcal{S}$ is also a set in $\mathcal{S}$.

\begin{conjecture}[Frankl, 1979]\label{Frankl}
 If $\mathcal{S}\subseteq 2^{[n]}$ is union-closed and nonempty, then there exists an element in $[n]$ present in at least half of the sets of $\mathcal{S}$.
\end{conjecture}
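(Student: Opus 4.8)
The plan is to reduce the conjecture to the single inequality $f(n,a)\le 2a$, which by the result of \cite{PRT} quoted above is equivalent to Frankl's conjecture, and then to attack that inequality directly through the integer program whose optimum equals $f(n,a)$. First I would recall the formulation: introduce a binary variable $x_S$ for every $S\subseteq[n]$ recording whether $S$ lies in the family, impose the union-closure constraints $x_S+x_T-x_{S\cup T}\le 1$ for all pairs $S,T\subseteq[n]$ (so that $x_S=x_T=1$ forces $x_{S\cup T}=1$), impose the degree constraints $\sum_{S\ni i}x_S\le a$ for each $i\in[n]$, and maximize $\sum_S x_S$. A union-closed family of maximum size is then an optimal integral solution, so $f(n,a)$ is exactly the integer optimum, and the target $f(n,a)\le 2a$ is a statement about that optimum rather than about the relaxation the paper analyzes.

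Second, I would use the extremal families to determine what a tight certificate must look like. The power set $2^{[n]}$ has $2^n$ sets with every element in exactly $2^{n-1}=a$ of them, so it attains $f(n,a)=2a$ with equality, while the evaluation $f(n,2^{n-1}-1)=2^n-n$ recorded in the abstract shows precisely how the optimum degrades under a small perturbation of $a$. These examples reveal that a bound of the form $2a$ cannot be extracted from the degree constraints alone: summing $\sum_{S\ni i}x_S\le a$ over all $i$ only controls $\sum_S|S|\,x_S\le na$, which says nothing about $\sum_S x_S$. Hence union-closure must enter essentially, and the route I would pursue is linear-programming duality—exhibit for a suitably strengthened program a feasible dual solution, that is, nonnegative multipliers on the union-closure and degree constraints, whose objective value is exactly $2a$. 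Constructing such a dual amounts to finding a weighting of the sets and pairs that certifies $\sum_S x_S\le 2\max_i\sum_{S\ni i}x_S$ combinatorially.

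Third, since the paper establishes that the plain relaxation already has optimum $\Theta(a^2)$, the pairwise inequalities $x_S+x_T-x_{S\cup T}\le 1$ are by themselves far too weak, so the core of the argument must be a systematic addition of valid inequalities cutting the relaxation from $O(a^2)$ down to $2a$. Concretely I would look for inequalities supported on small union-closed sub-configurations forced around a set of maximum degree—chains, the ``diamonds'' $\{S,\ S\cup\{i\},\ S\cup\{j\},\ S\cup\{i,j\}\}$, and more generally the local lattice structure above a fixed set—prove each valid for the integer hull, and then attempt to assemble them, via an averaging argument over the choice of the distinguished element $i^\ast$ of maximum degree, into the single inequality $\sum_S x_S\le 2\sum_{S\ni i^\ast}x_S$.

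The hard part, and the reason the statement stands as a conjecture rather than a theorem, is exactly this integrality gap. Encoding union-closure—a closure property under the global binary operation $\cup$—by a manageable family of local linear inequalities appears to lose precisely the factor of $a$ separating $O(a^2)$ from $2a$, and I do not expect any finite, uniformly describable collection of cuts of the above type to close it, since the lattice of union-closed families has no tractable facet description in the relevant direction. A realistic intermediate goal, which the present machinery can plausibly reach, is to improve the relaxation bound to $o(a^2)$, or to prove $f(n,a)\le 2a$ in restricted regimes—for instance $a$ near $2^{n-1}$, where the exact computation of $f(n,2^{n-1}-1)$ shows the extremal analysis is already under control—while the general inequality remains the central obstruction.
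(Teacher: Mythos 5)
You have not proved the statement, and you are candid about that in your final paragraph --- but it is worth being explicit that the paper does not prove it either, because it cannot: this is Frankl's conjecture itself, which the paper (like everyone else) leaves open. The paper's actual content is strictly weaker than what a proof would require. Your reduction to $f(n,a)\le 2a$ via the integer program is exactly the paper's framework, and your proposed mechanism --- a dual certificate built from nonnegative multipliers on the frequency and union-closure constraints --- is precisely the technique of Theorem \ref{thm:upperbound}. But that technique, as executed in the paper, only delivers $\bar f(a,a)=\frac{5a^4-12a^3+31a^2-24a+48}{12(a^2-3a+4)}=O(a^2)$, a full factor of $a$ away from the target. So the ``gap'' in your proposal is the entire argument: everything after ``exhibit a feasible dual solution whose objective value is exactly $2a$'' is a wish, not a construction, and no assembly of chain or diamond inequalities is carried out or shown to be valid and sufficient.

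One concrete inaccuracy to fix: you assert that the paper ``establishes that the plain relaxation already has optimum $\Theta(a^2)$.'' It does not. The paper proves only an \emph{upper bound} of $O(a^2)$, and that bound is not even on the relaxation $f_r(n,a)$ itself but on a further restriction of its dual (all multipliers on 123-type and on 224-type union-closed inequalities forced equal); the paper's own data ($\lfloor f_r(8,8)\rfloor=20$ versus $\lfloor\bar f(8,8)\rfloor=30$) shows the restricted dual is far from tight, and no $\Omega(a^2)$ lower bound on $f_r$ appears anywhere. So your claim that the pairwise inequalities are ``by themselves far too weak'' and that cuts must close a gap of order $a$ is plausible --- the numerics do show $f_r(a,a)>2a$ --- but it is not established, and your proposal should not lean on it as a known fact. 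Your identification of the extremal families ($a=2^{n-1}$ giving the full power set, and $f(n,2^{n-1}-1)=2^n-n$) is consistent with the paper's Section \ref{sec:summer}, and your suggested intermediate goals (strengthening the relaxation with valid inequalities, or exact evaluation of $f_r$) match the directions sketched in Section \ref{sec:future}; but none of this constitutes, or approaches, a proof of the conjecture as stated.
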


\begin{example}
For example, $\mathcal{S}=\{\{1\}, \{2, 3\}, \{1, 2, 3\}, \{1, 2, 3, 4\}\}$ is union-closed, and elements $1$, $2$ and $3$ are all present in at least half of the four sets.
\end{example}

The Frankl conjecture has been well-studied from many different points of view. It is known to hold when $n\leq 11$ (\cite{BosMark08}) as well as when $|\mathcal{S}| \leq 46$ (\cite{RobSimp10}). A survey of results known for the conjecture was published in 2013 (\cite{BruhnSchaudt}).

In \cite{PRT}, the authors reformulated the Frankl conjecture as follows.

\begin{conjecture}\label{maximization}
Consider any $a, n\in \mathbb{N}^+$. Let $\mathcal{S}\subseteq 2^{[n]}$ be a union-closed family such that each element is in at most $a$ sets of $\mathcal{S}$. Then $|\mathcal{S}|\leq 2a$.
\end{conjecture}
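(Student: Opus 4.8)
The plan is to translate the statement into a question about the optimum of an integer program and to bound that optimum directly. Following the formulation of \cite{PRT}, I would introduce a binary variable $x_S\in\{0,1\}$ for every $S\subseteq[n]$, with $x_S=1$ encoding $S\in\mathcal{S}$. Union-closedness is enforced by the linear constraints $x_S+x_T-x_{S\cup T}\leq 1$ for all pairs $S,T\subseteq[n]$; the frequency hypothesis becomes $\sum_{S\ni i}x_S\leq a$ for every $i\in[n]$; and $|\mathcal{S}|=\sum_S x_S$ is the objective to be maximized. The assertion $|\mathcal{S}|\leq 2a$ is then exactly the claim that the optimal value of this program is at most $2a$, uniformly in $n$, so it suffices to bound that value.

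First I would pass to the linear relaxation $0\leq x_S\leq 1$, whose optimum is an upper bound on the integer optimum and hence on $|\mathcal{S}|$. The dual viewpoint is the most useful here: any assignment of nonnegative multipliers $\mu_i$ to the frequency constraints, $\lambda_{S,T}$ to the union-closed constraints, and $\nu_S$ to the bounds $x_S\leq 1$, whose induced coefficient on each $x_S$ is at least $1$, yields the bound $\sum_S x_S\leq a\sum_i\mu_i+\sum_{S,T}\lambda_{S,T}+\sum_S\nu_S$. The goal is therefore to exhibit such a dual certificate of total value $2a$, which would establish $|\mathcal{S}|\leq 2a$ at once.

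The natural structural inputs for building these multipliers are that a finite nonempty union-closed family has a top element $U=\bigcup_{S\in\mathcal{S}}S\in\mathcal{S}$, and that for each $i$ the subfamily $\{S\in\mathcal{S}:i\in S\}$ is again union-closed with the same top $U$ whenever $i\in U$. I would organize the frequency weight $\mu$ around a single element $i$ of largest frequency, mirroring the classical search for an element lying in at least half the sets: writing $|\mathcal{S}|=|\{S\ni i\}|+|\{S\not\ni i\}|$, the task reduces to injecting the second block into the first, and encoding such an injection as a combination of the inequalities $x_T+x_{T'}-x_{T\cup T'}\leq 1$ is the mechanism by which the relaxation value would be driven down toward $2a$.

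The main obstacle, and the reason this is posed as a conjecture, is precisely the construction of that certificate. The plain relaxation is not tight, and its integrality gap is exactly what separates the $O(a^2)$ bound the relaxation can guarantee (the subject of the paper's main theorem) from the target $2a$. Closing the gap requires adjoining valid inequalities strong enough to certify $\sum_S x_S\leq 2a$, and by the reformulation above producing and verifying such a family of cuts is equivalent to resolving the Frankl conjecture in full. I would accordingly expect the realistic outcome of this plan to be a strengthened relaxation with provably smaller integrality gap rather than the complete $2a$ bound, with the search for the right union-closed cuts as the decisive and hardest step.
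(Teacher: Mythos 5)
You have not proved the statement, and no proof should have been expected: Conjecture \ref{maximization} is the Frankl conjecture itself in reformulated clothing, and the paper does not prove it either. The paper's entire treatment is a two-line contrapositive counting argument establishing equivalence with Conjecture \ref{Frankl}: if some union-closed $\mathcal{S}$ had $|\mathcal{S}|>2a$ with every element in at most $a$ sets, then every element would lie in fewer than half the sets of $\mathcal{S}$, contradicting Conjecture \ref{Frankl} (and conversely, a counterexample to Conjecture \ref{Frankl} with maximum element-frequency $a$ has more than $2a$ sets). Your proposal never addresses this equivalence, which is the only content the paper attaches to the statement; instead you sketch a direct attack via the integer program and its LP dual.

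As a direct attack, your plan contains a step that provably fails, not merely a hard one. Your stated goal is a dual certificate of total value $2a$ for the \emph{plain} linear relaxation. By LP duality such a certificate would force the relaxation optimum $f_r(n,a)$ to be at most $2a$; but the paper records $\lfloor f_r(8,8)\rfloor = 20 > 16 = 2a$, so the relaxation value already exceeds $2a$ at $n=a=8$, and the relaxation only weakens as $n$ grows. Hence no such certificate exists, and adjoining valid cuts is not an optional refinement but a necessity --- and finding cuts strong enough to certify $2a$ is exactly the open problem, as your final paragraph concedes. Carried through honestly, your program reproduces the machinery behind Theorem \ref{thm:upperbound}, whose certificate (multipliers $\alpha$, $\beta$, $\gamma$ on the frequency, 123-, and 224-union-closed inequalities) yields only an $O(a^2)$ bound. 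So the proposal is a reasonable research outline for Section \ref{sec:future}, but as a proof of Conjecture \ref{maximization} it has an irreparable gap; the correct blind answer here was the paper's short equivalence argument, since the statement itself is open.
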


This is indeed equivalent to Conjecture \ref{Frankl} since if there exists a union-closed family $\mathcal{S}$ where $|\mathcal{S}|>2a$ and every element is in at most $a$ sets of $\mathcal{S}$, then we would have found a counterexample to Conjecture \ref{Frankl}: every element would be in less than half of the sets of $\mathcal{S}$.

\begin{definition} Let $f(n,a)$ be the maximum number of sets in a union-closed family on a ground set of $n$ elements where each element is in at most $a$ sets for some $a,n\in \mathbb{N}^+$.
\end{definition}

Note that Conjecture \ref{maximization} can be reformulated as saying that $f(n,a)\leq 2a$ for all $a,n\in \mathbb{N}^+$.

In Section \ref{sec:IP}, we introduce some theory of linear and integer programming that is necessary to understand the rest of the paper. In Section \ref{sec:results}, we discuss the integer program model for $f(n,a)$ that was introduced in \cite{PRT} and we use a linear relaxation of that integer program to give an upper bound of $O(a^2)$ for $f(n,a)$. In Section \ref{sec:summer}, we show that $f(n,2^{n-1}-1)=2^n-n$.

Neither of these results are completely new. Indeed, in \cite{bbe}, the authors showed that the union-closed conjecture holds for any union-closed collection of sets where the number of sets is at least $\frac{2}{3}2^n$. Thus it was already known that $f(n,2^{n-1}-1)\leq  2^n-2$.

Moreover, in \cite{Knill}, it was shown that any union-closed collection on $m$ sets contains an element in at least $\frac{m-1}{\log_2 m}$ sets of the family. Our upper bound of $O(a^2)$ for $f(n,a)$ is weaker. However, assuming that Conjecture 13 of \cite{PRT} is true, we recover an equivalent bound. The interesting thing is the techniques used here could be improved in many simple ways, and thus potentially yield better results. These directions are discussed in \ref{sec:future}.

Thus, although the results in this paper are not new or impressive, the techniques used are completely different than those used in the papers above and still haven't been pushed to their full potential.


\section{An upper bound for $f(n,a)$}

The goal of this section is to provide an upper bound for $f(n,a)$ for all $a,n\in \mathbb{N}^+$. To do so, we consider an integer program that outputs $f(n,a)$ in \ref{sec:results}. We first introduce some necessary concepts from linear and integer programming in \ref{sec:IP}. 

\subsection{Linear and Integer Programming}\label{sec:IP}

Let $A\in \mathbb{R}^{m\times n}$, $\mathbf{b}\in \mathbb{R}^m$ and $\mathbf{c}\in \mathbb{R}^n$. The following is an \emph{integer program}: $\max\{\mathbf{c}^\top\mathbf{x}| A\mathbf{x} \leq \mathbf{b}, \mathbf{x}\in \mathbf{Z}^n\}$. Let $\lambda$ be the resulting \emph{optimal value} and say $\mathbf{x}^*$ is an \emph{optimal solution} if $A\mathbf{x}^* \leq \mathbf{b}$, $\mathbf{x}^* \in \mathbf{Z}^n$ and $\mathbf{c}^\top \mathbf{x}^* = \lambda$. Let $P:= \{\mathbf{x}\in \mathbf{Z}^n | A\mathbf{x}\leq \mathbf{b}\}$. Any $\mathbf{x}\in P$ is said to be a \emph{solution} of the integer program.

We now consider a \emph{linear relaxation} of the previous integer program: $\max\{\mathbf{c}^\top\mathbf{x}| A\mathbf{x} \leq \mathbf{b}, \mathbf{x}\in \mathbb{R}^n\}$. This program is a \emph{linear program}. Let $\bar{\lambda}$ be the resulting optimal value and say $\bar{\mathbf{x}}$ is an \emph{optimal solution} if $A\bar{\mathbf{x}} \leq \mathbf{b}$, $\bar{\mathbf{x}} \in \mathbb{R}^n$ and $\mathbf{c}^\top \bar{\mathbf{x}} = \bar{\lambda}$. Let $\bar{P}:= \{\mathbf{x}\in \mathbb{R}^n | A\mathbf{x}\leq \mathbf{b}\}$.  Any $\mathbf{x}\in \bar{P}$ is said to be a \emph{solution} of the linear program. 

First note that $P \subseteq \bar{P}$. Thus any optimal solution $\mathbf{x}^*$ for the integer program  is in $\bar{P}$ and $\mathbf{c}^\top x^* \leq \bar{\lambda}$. Thus $\lambda \leq \bar{\lambda}$, that is, the linear relaxation yields an upper bound to the original integer program. 

One advantage of the linear relaxation is that it can be solved in polytime through the \emph{ellipsoid method}, for example. No polytime algorithm to solve integer programs is known in general. Further, one can apply the theory of \emph{strong duality} to the linear relaxation. Through it, we obtain that, if $\bar{P}$ is not empty, then  $\bar{\lambda}=\min\{\mathbf{b}^\top \mathbf{y} | A^\top \mathbf{y} = \mathbf{b}, \mathbf{y} \geq \mathbf{0}, \mathbf{y}\in \mathbb{R}^m\}$, i.e., there exists a different linear program that yields the same optimal value.

Finally, consider one last linear program where we add additional constraints $$\min\{\mathbf{b}^\top \mathbf{y} | A^\top \mathbf{y} = \mathbf{b}, C\mathbf{y}=\mathbf{d}, \mathbf{y} \geq \mathbf{0}, \mathbf{y}\in \mathbb{R}^m\}$$ where $C\in \mathbb{R}^{k\times m}$ and $\mathbf{d}\in \mathbb{R}^m$. Let the optimal value of this linear program be $\tilde{\lambda}$. Note that any optimal solution $\tilde{\mathbf{y}}$ of this linear program is a solution of the previous linear program. Therefore, $\tilde{\lambda}\geq\bar{\lambda}\geq \lambda$ and this last linear program also yields an upper bound to the original integer program.

\subsection{Results}\label{sec:results}
In \cite{PRT}, the authors introduced the following integer program that computes $f(n,a)$ for any fixed $n, a\in \mathbb{N}^+$.

\begin{align*}
f(n,a) = \max & \sum_{S\in \mathcal{S}_n} x_S & \\
\textup{s.t. } & x_S+x_T \leq 1+x_{S\cup T} & \forall S, T \in \mathcal{S}_n \\
& \sum_{\substack{S\in \mathcal{S}_n: \\ e\in S}} x_S \leq a & \forall e\in [n] \\
& x_S \in \{0,1\} & \forall S\in \mathcal{S}_n
\end{align*}

The claim is that $\mathcal{S}:=\{S \in 2^{[n]}|x_S=1\}$ is a union-closed family. Indeed, from the first set of constraints, if sets $S$ and $T$ are present in $\mathcal{S}$, then the associated variables will be one, and thus $x_{S\cup T}$ must also be one, meaning that $S\cup T$ must also be in $\mathcal{S}$. If either $S$ or $T$ is not present, then there is no restriction on whether $S\cup T$ must be in the collection. The second set of constraints ensures that each element is in at most $a$ sets of the collection. Finally, the total number of sets in such a union-closed collection is maximized by the objective function.

The following lemma from \cite{PRT} (Proposition 12.1 and Theorem 20 in that paper) is useful in restricting which $f(n,a)$'s need to be studied.

\begin{lemma}{\cite{PRT}}\label{diagonal}
In general, $f(n,a)\leq f(n+1,a)$ for all $a, n\in \mathbb{N}$. Moreover, $f(n,a)=f(n+1,a)$ for all $n\geq a-1$.
\end{lemma}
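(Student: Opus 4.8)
The plan is to prove the two assertions separately, since the monotonicity is a short embedding argument while the stabilization rests on a structural fact about union-closed families.

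For the monotonicity $f(n,a)\le f(n+1,a)$, I would argue by a trivial embedding. Every subset of $[n]$ is also a subset of $[n+1]$, so any union-closed family $\mathcal{S}\subseteq 2^{[n]}$ witnessing $f(n,a)$ is, verbatim, a union-closed family inside $2^{[n+1]}$ in which the new element $n+1$ lies in no set (hence in at most $a$ sets) while every old element still lies in at most $a$ sets. Thus $f(n+1,a)\ge |\mathcal{S}|=f(n,a)$. In the language of \ref{sec:results}, any feasible point of the integer program computing $f(n,a)$ extends to a feasible point of equal objective value for the program computing $f(n+1,a)$ by setting $x_S=0$ for every $S$ containing $n+1$.

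For the equality when $n$ is large, the first part reduces the task to proving $f(n+1,a)\le f(n,a)$. I would start from an optimal family $\mathcal{S}\subseteq 2^{[n+1]}$ achieving $f(n+1,a)$ and try to realize the same number of sets on a ground set of size $n$. There are two cheap ways to eliminate a ground-set element without changing $|\mathcal{S}|$ or violating the degree bound: (a) if some element lies in no set of $\mathcal{S}$, discard it and relabel; (b) if two elements $e,f$ are \emph{parallel}, meaning every set of $\mathcal{S}$ contains both or neither, then identifying $e$ and $f$ yields a union-closed family on one fewer element (the sets are unchanged as abstract objects, only relabeled) whose merged element has the same degree as before, still at most $a$. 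Either move gives $f(n+1,a)\le f(n,a)$, so everything reduces to showing that once the ground set is large relative to $a$, at least one of these moves is always available.

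The crux, and the step I expect to be the main obstacle, is the structural claim that a \emph{reduced} union-closed family, that is, one in which every element lies in at least one set and no two elements lie in exactly the same sets, on a ground set of $N$ elements must have maximum degree at least $N$. Granting this, a reduced family with maximum degree at most $a$ has at most $a$ ground-set elements; hence once the ground set has more than $a$ elements the family cannot be reduced, so move (a) or (b) applies and we may pass to one fewer element. Together with the first part this establishes the asserted stabilization of $f(n,a)$ in $n$, with the precise threshold coming down to a careful accounting of the extremal configuration. To prove the structural claim I would induct on $N$ (or on $|\mathcal{S}|$), using the standard fact that for each element $e$ the family $\{S\in\mathcal{S}\,:\,e\notin S\}$ is again union-closed, together with the observation that union-closure forces $|\mathcal{S}|$ to grow with $N$ once all elements have distinct membership patterns; the configuration to keep in mind is a maximal chain $\emptyset\subset\{1\}\subset\{1,2\}\subset\cdots\subset[N]$, which is reduced and has maximum degree exactly $N$, showing both that the bound is tight and that chains are the obstruction to any further compression. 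The delicate point, and where the induction must be set up with care, is that deleting an element can simultaneously destroy full support and collapse two previously distinct membership patterns into one, so the inductive hypothesis has to be applied to a suitably chosen sub-family, for instance the sets avoiding an element of minimum degree, rather than naively.
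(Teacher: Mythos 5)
First, note that the paper does not actually prove this lemma: it is imported from \cite{PRT} (Proposition 12.1 and Theorem 20 there), so your proposal has to be judged as a standalone argument rather than against an in-paper proof. The first half is fine and is the standard argument: padding the ground set with an unused element $n+1$ turns any witness for $f(n,a)$ into a feasible family on $[n+1]$ with the same degrees, so $f(n,a)\le f(n+1,a)$.

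The second half has a genuine gap, in two places. (i) Your two compression moves (delete an uncovered element, identify parallel elements) are valid and correctly reduce everything to the structural claim that a reduced union-closed family on $N$ elements has maximum degree at least $N$. But that claim is the entire content of the stabilization, and you only sketch it. The induction you outline does not close as described: deleting an element $x$ and passing to $\mathcal{S}'=\{S\setminus\{x\}:S\in\mathcal{S}\}$ does give a union-closed, spanning, separating family on $N-1$ elements, but the quotient map can identify $S$ with $S\cup\{x\}$, so for $e\ne x$ one only gets $\deg_{\mathcal{S}}(e)\ge\deg_{\mathcal{S}'}(e)$; the inductive hypothesis then yields maximum degree at least $N-1$, not $N$, and recovering the missing unit (exhibiting a collapsed pair $S$, $S\cup\{x\}$ both containing the high-degree element) is precisely the step you defer. (ii) Even granting the claim, your own chain example shows it is tight, so it only forbids reduced families on \emph{more} than $a$ elements; your argument therefore gives $f(n+1,a)\le f(n,a)$ only for $n+1>a$, i.e., $n\ge a$. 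The boundary case $n=a-1$ of the stated lemma --- a reduced family on exactly $a$ elements with maximum degree exactly $a$ --- is untouched, and it is genuinely problematic: as printed the lemma already fails there for small $a$ (for $a=3$, $n=2$ one has $f(2,3)=4$, since $2^{[2]}$ has only four sets and each element lies in two of them, while the paper's table gives $f(3,3)=5$), so no argument of this shape can recover the printed threshold. For the only use the paper makes of the lemma, namely $f(n,a)\le f(a,a)$ for all $n$, the weaker threshold $n\ge a$ that your approach targets would suffice, but to make even that rigorous you must supply a complete proof of the structural claim.
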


Note that this implies that for a fixed $a\in \mathbb{N}$, $f(n,a)\leq f(a,a)$ for all $n\in \mathbb{N}$. Integer programming solvers such as Gurobi or Cplex can compute the values of $f(a,a)$ up to 8.

\[
\begin{array}{|c|c|}
\hline
a & f(a,a)\\
\hline
1 & 2\\
2 & 4\\
3& 5\\
4 & 8\\
5 & 9\\
6 & 10\\
7 & 12\\
8 & 16\\
\hline
\end{array}
\]

For other values, we give the following upper bound by applying the concepts of linear and integer programming discussed in the previous subsection. We will consider the dual of $f(n,a)$ and add constraints requiring all variables corresponding to union-closed inequalities involving sets of some fixed cardinalities to be the same. 

\begin{theorem}\label{thm:upperbound}
We have that $f(n,a)\leq \frac{5a^4-12a^3+31a^2-24a+48}{12(a^2-3a+4)}$ for all $a\in \mathbb{Z}$ and $n\geq 7$.
\end{theorem}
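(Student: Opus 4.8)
The plan is to follow exactly the three-step pipeline set up in Section~\ref{sec:IP}: pass to the linear relaxation of the integer program defining $f(n,a)$, dualize it by strong duality, and then strengthen the dual by adjoining symmetry (aggregation) constraints $C\mathbf{y}=\mathbf{d}$. Since adding such constraints can only shrink the dual feasible region, the resulting optimum $\tilde\lambda$ satisfies $\tilde\lambda\ge\bar\lambda\ge\lambda=f(n,a)$, so \emph{any} feasible solution of the strengthened dual already yields an upper bound. Concretely, the dual of the relaxation has a variable $y_{S,T}\ge 0$ for every union-closed inequality $x_S+x_T\le 1+x_{S\cup T}$, a variable $z_e\ge 0$ for every element inequality $\sum_{S\ni e}x_S\le a$, and box multipliers for $0\le x_S\le 1$; its objective is $\sum_{S,T}y_{S,T}+a\sum_e z_e$ (up to box terms), and its feasibility constraint for each set $U$ forces the net coefficient of $x_U$, namely $\sum_{U\in\{S,T\}}y_{S,T}-\sum_{S\cup T=U}y_{S,T}+\sum_{e\in U}z_e$ (up to box terms), to equal $1$.

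First I would exploit the $\mathrm{Sym}(n)$-symmetry of the constraint system: permuting $[n]$ permutes the subsets and carries constraints to constraints, so I impose as the extra constraints $C\mathbf{y}=\mathbf{d}$ that each $y_{S,T}$ depend only on the type $(\lvert S\rvert,\lvert T\rvert,\lvert S\cup T\rvert)$ of the pair, and that all $z_e$ share a common value $z$. After this aggregation the objective and every feasibility equation depend only on cardinalities: the objective becomes $\sum_{i,j,k}N_{i,j,k}\,\eta_{i,j,k}+a\,n\,z$ (up to box terms), where $N_{i,j,k}$ counts the pairs of a given type as a product of binomial coefficients, and the $2^n$ feasibility constraints collapse to a single equation for each cardinality $u=\lvert U\rvert$. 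This turns the dual into a small, structured linear program whose size no longer grows with $2^n$.

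Next I would exhibit an explicit feasible solution of this aggregated dual, supported on a bounded number of cardinality-types (independent of $n$), with each $\eta_{i,j,k}$ and $z$ written as an explicit function of $a$. Checking feasibility then reduces to verifying one algebraic identity per cardinality $u$ together with the nonnegativity of every $\eta_{i,j,k}$ and of $z$; evaluating the objective on this solution and simplifying is what should produce the rational function $\frac{5a^4-12a^3+31a^2-24a+48}{12(a^2-3a+4)}$, whose leading behavior $\tfrac{5}{12}a^2$ recovers the advertised $O(a^2)$ bound. The hypothesis $n\ge 7$ should enter precisely here, since the construction uses sets of several distinct cardinalities and the multiplicities $N_{i,j,k}$ together with the per-cardinality equations must all be well-defined and positive; for the remaining small cases one can instead invoke Lemma~\ref{diagonal} and the tabulated values of $f(a,a)$.

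I expect the main obstacle to be the construction and verification of the aggregated dual solution rather than the duality setup itself. The bookkeeping of the multiplicities $N_{i,j,k}$ and of the collapsed feasibility equations is delicate, and the genuine difficulty is guessing the correct support and the correct $a$-dependent values of the $\eta_{i,j,k}$ so that all feasibility equalities hold simultaneously while every variable stays nonnegative for all $a$; equivalently, producing the right Farkas certificate that expresses $\sum_U x_U$ as a nonnegative combination of the union-closed and element inequalities. Once such a certificate is in hand, the stated bound and its asymptotics follow from a routine algebraic simplification.
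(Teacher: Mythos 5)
Your overall strategy is exactly the paper's: pass to the LP relaxation, dualize, impose $\mathrm{Sym}(n)$-aggregation so that multipliers depend only on cardinality types, and exhibit an explicit feasible point of the aggregated dual whose objective value is the stated rational function. The $n\ge 7$ hypothesis does indeed enter as a nonnegativity condition on one of the aggregated multipliers, and the final reduction to a function of $a$ alone goes through Lemma~\ref{diagonal} ($f(n,a)\le f(a,a)$), just as you suggest for the small cases.

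However, as written the proposal has a genuine gap: the entire mathematical content of the theorem is the explicit certificate, and you defer it ("the genuine difficulty is guessing the correct support and the correct $a$-dependent values"). Nothing in your argument pins down which types to use, what the multipliers are, or why feasibility holds, so no bound — let alone this specific one — actually follows. For the record, the paper's certificate is supported on exactly three types plus one extra inequality: weight $\alpha=\frac{n^2-3n+8}{3n^2-9n+12}$ on each of the $n$ frequency inequalities, weight $\beta=\frac{4}{3n^2-9n+12}$ on each of the $3\binom{n}{3}$ union-closed inequalities of type $(1,2,3)$, weight $\gamma=\frac{1}{\binom{n-2}{2}}\bigl(-1+\frac{2(n-2)^2}{3+3\binom{n-1}{2}}\bigr)$ on each of the $3\binom{n}{4}$ inequalities of type $(2,2,4)$, and weight $1$ on $x_\emptyset\le 1$; one then checks $c_S=1$ for $|S|\le 3$, $c_S\ge 1$ for $|S|\ge 4$ (this is where $x_S\ge 0$ is used, so exact dual feasibility is not needed), and $\gamma\ge 0$ forces $n\ge 7$. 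Evaluating $n\alpha a+3\binom{n}{3}\beta+3\binom{n}{4}\gamma+1$ at $n=a$ gives the stated formula. One small correction to your setup: you say the dual feasibility constraint "forces the net coefficient of $x_U$ to equal $1$"; what is actually needed (and what the paper verifies) is only the inequality $c_U\ge 1$, which suffices because the primal variables are nonnegative — insisting on equality would make the type-$(2,2,4)$ bookkeeping fail for $|U|\ge 4$.
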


\begin{proof}
Let
\begin{align*}
\alpha & = 1-\frac{2\binom{n-1}{2}}{3+3\binom{n-1}{2}} = \frac{n^2-3n+8}{3n^2-9n+12}\\
\beta & = \frac{2}{3+3\binom{n-1}{2}}=\frac{4}{3n^2-9n+12}\\
\gamma &=\frac{1}{\binom{n-2}{2}}\left(-1 + \frac{2(n-2)^2}{3+3\binom{n-1}{2}}\right).
\end{align*}

Note that $\gamma\geq 0$ if $n\geq 7$ and $\alpha, \beta \geq 0$ for all $n\geq 0$. 

We claim that the linear combination obtained by taking
\begin{align*}
&\sum_{e\in [n]} \alpha \left(\sum_{\substack{S\in \mathcal{S}_n:\\e\in S}} x_S \leq a\right)\\
+ &\sum_{\substack{S,T\in \mathcal{S}_n:\\|S|=1, |T|=2\\ |S\cup T|= 3}} \beta \left(x_S+x_T -x_{S\cup T} \leq 1\right) \\
+ &\sum_{\substack{S, T\in \mathcal{S}_n:\\|S|=2, |T|=2\\ |S\cup T|= 4}} \gamma \left(x_S+x_T -x_{S\cup T}  \leq 1 \right) \\
+ & x_\emptyset \leq 1
\end{align*}
yields
$$\sum_{S\in \mathcal{S}} c_S x_S \leq \bar{f}(n,a)$$
where each $c_S \geq 1$ and $\bar{f}(n,a)=n\cdot a \cdot \alpha + 3\binom{n}{3} \cdot 1 \cdot \beta + 3\binom{n}{4}\cdot 1\cdot \gamma + 1$. 

Let's check this by calculating the coefficient for sets $S$ of different size. Let's call inequalities $\sum_{\substack{S\in \mathcal{S}:\\e\in S}} x_S \leq a$ \emph{frequency inequalities}, $x_S+x_T -x_{S\cup T}\leq 1$ where $|S|=1$, $|T|=2$, $|S\cup T|= 3$ \emph{123-union-closed inequalities}, and $x_S+x_T -x_{S\cup T}\leq 1$ where $|S|=2$, $|T|=2$, $|S\cup T|= 4$ \emph{224-union-closed inequalities}.  
\begin{itemize}
\item $|S|=0$: the empty set only appears once with a coefficient of 1.
\item $|S|=1$: any $1$-element set will appear in exactly one frequency inequality and $\binom{n-1}{2}$ 123-union-closed inequalities, and no 224-union-closed inequalities. Thus the coefficient for any $1$-element will be $1\alpha + \binom{n-1}{2} \beta = 1$. 
\item $|S|=2$: any $2$-element set will appear in exactly two frequency inequalities and $\binom{n-2}{1}$ 123-union-closed inequalities and $\binom{n-2}{2}$ 224-union-closed inequalities. Note that it always appear positively. Thus the coefficient for any $2$-element set will be $2\alpha + \binom{n-2}{1} \cdot \beta + \binom{n-2}{2}\gamma = 1$.
\item $|S|=3$: any $3$-element set will appear in exactly three frequency inequalities. It will also appear negatively in three union-closed 123-union-closed inequalities, and zero 224-union-closed inequality. Thus any $3$-element set will have coefficient $3\alpha-3\beta=1$.
\item $|S|=4$: any $4$-element set will appear in exactly four frequency inequalities. It will also appear negatively in three 224-union-closed inequalities, and zero 123-union-closed inequality. Thus any $4$-element set will have coefficient $4\alpha - 3\gamma\geq 1$.
\item $|S|=i, i\geq 5$: any $i$-element set will appear in exactly five frequency inequalities and nowhere else. Then $c_S=\frac{5n^2-15n+40}{3n^2-9n+12}$ which is always at least 1.
\end{itemize}

Finally, note that in our linear combination, we take $n$ frequency inequalities, $3\binom{n}{3}$ 123-union-closed inequalities, $3\binom{n}{4}$ 224-union-closed inequalities and one empty set inequality. Thus

$$\bar{f}(n,a)=n\cdot a \cdot \alpha + 3\binom{n}{3} \cdot 1 \cdot \beta + 3\binom{n}{4}\cdot 1\cdot \gamma + 1.$$ 

Since $x_S\geq 0$ for all $S\in \mathcal{S}$, $\sum_{S\in \mathcal{S}} x_S \leq \sum_{S\in \mathcal{S}} c_S x_S$, and so $\bar{f}(n,a)$ is an upper bound for $f(n,a)$.

By Lemma~\ref{diagonal}, we know that, for a fixed $a\in \mathbb{N}$, $f(n,a)\leq f(a,a)$ for all $n\in \mathbb{N}$. Thus, finding an upper bound for $f(a,a)$ yields an upper bound for all $f(n,a)$. Thus $$\bar{f}(a,a)=\frac{5a^4-12a^3+31a^2-24a+48}{12(a^2-3a+4)}$$ is an upper bound for $f(n,a)$ for all $n\in \mathbb{N}$. 

\end{proof}

To give the reader a better grasp on this upper bound, here is a table compiling a few values of $\lfloor \bar{f}(a,a) \rfloor$.

\[
\begin{array}{|c|c|}
\hline
a & \lfloor \bar{f}(a,a) \rfloor \\
\hline
7 & 24\\
8 & 30\\
9 & 37\\
10 & 46\\
11 & 55\\
12 & 64\\
13 & 75\\
14 & 86 \\
15 & 99\\
16 & 112\\
\hline
\end{array}
\]

\subsection{Future directions}\label{sec:future}
We first note that the result we found in Theorem \ref{thm:upperbound} is an upper bound for the linear relaxation of $f(n,a)$ where we replace $x_e \in \{0,1\}$ by $0\leq x_e \leq 1$. In other words, we are giving an upper bound to an upper bound of $f(n,a)$, namely to its linear relaxation $f_r(n,a)$. For example, $\lfloor f_r(8,8)\rfloor =20<\lfloor \bar{f}(8,8) \rfloor = 30$ and $\lfloor f_r(9,9)\rfloor =26 < \lfloor \bar{f}(9,9)\rfloor =36$. To find this upper bound for the linear relaxation, we considered its dual and added constraints that required that all variables corresponding to union-closed inequalities involving sets of some fixed cardinalities $a$, $b$ and $c$ be the same. This is very restrictive. Thus it might be possible to give a better upper bound for the linear relaxation of $f(n,a)$ or even to find its exact value. 

Furthermore, the linear relaxation itself gets weaker as $n$ increases. By adding valid linear inequalities, one can strengthen the linear relaxation. A few are discussed in \cite{thesis}. 

Finally, note that the formula we found for $\bar{f}(n,a)$ in the proof of \ref{thm:upperbound} is for any $n,a$ with $n\geq 7$, and not only for $n=a$. In \cite{PRT}, the authors conjectured that $f(n,a)=f(n+1,a)$ for all $n\geq \lceil \log_2 a \rceil +1$, i.e., for all values of $n$ for which it makes sense to compute $f(n,a)$ given some particular $a$. If that conjecture is true, then $f(n,a)$ is upper bounded by $f(\lceil \log_2 a \rceil + 1,a)$ and thus by $\bar{f}(\lceil \log_2 a \rceil+1, a)$ for all $a$. Note that $\bar{f}(\lceil \log_2 a \rceil+1, a)$ yields an upper bound similar to Knill's lower bound that states that for any union-closed family with $m$ sets, there exists an element in at least $\frac{m-1}{\log_2 m}$ sets.

We believe that these techniques have much more to offer. Despite all the simplifications we used, they still led to some results. By removing the harshest of these simplifications, one might obtain new and interesting results for the Frankl conjecture.

\section{A proof of $f(n,2^{n-1}-1)=2^n-n$}\label{sec:summer}

\begin{definition}
Fix $n$ and $m$. Then let $g(n,m)$ be the minimum number of sets containing the most frequent element in a union-closed family of $m$ sets on $n$ elements. 
\end{definition}

\begin{lemma}\label{lem:missingsubsets}
Consider a union-closed family that does not contain some set $S$ where $|S|\geq 2$. Then the family contains at most one set $T\subset S$ such that $|T|=|S|-1$.
\end{lemma}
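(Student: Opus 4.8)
The plan is to proceed by contradiction, leveraging the union-closed property in the most direct way possible. First I would reformulate the objects in question: a subset $T \subsetneq S$ with $|T| = |S| - 1$ is obtained from $S$ by deleting exactly one element, so every such $T$ has the form $S \setminus \{e\}$ for some $e \in S$, and there are exactly $|S|$ of them. This reframing turns the statement ``at most one such $T$ lies in the family'' into a statement about which deletions $S \setminus \{e\}$ can simultaneously be present.

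Next, I would suppose for contradiction that the family contains two distinct sets of this type, say $T_1 = S \setminus \{e_1\}$ and $T_2 = S \setminus \{e_2\}$ with $e_1 \neq e_2$. The key computation is to determine $T_1 \cup T_2$. Since $e_1$ and $e_2$ are distinct, no element of $S$ is deleted in forming both $T_1$ and $T_2$; hence each element of $S$ survives in at least one of the two sets, giving $T_1 \cup T_2 = S$.

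Finally, I would invoke union-closure: because $T_1$ and $T_2$ both belong to the family, so does their union $S$, contradicting the hypothesis that $S$ is absent from the family. I do not anticipate a genuine obstacle here, since the entire argument is a single application of union-closure once the identity $T_1 \cup T_2 = S$ is in hand. The only point requiring a moment's care is verifying that identity, which rests entirely on $e_1 \neq e_2$; the hypothesis $|S| \geq 2$ merely guarantees that there exist at least two candidate subsets of size $|S|-1$ to compare in the first place, so that the ``at most one'' conclusion is nontrivial.
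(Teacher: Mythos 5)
Your proposal is correct and follows essentially the same contradiction argument as the paper: two distinct codimension-one subsets of $S$ union to $S$, which union-closure forces into the family. The extra detail you supply (writing $T_1 = S\setminus\{e_1\}$, $T_2 = S\setminus\{e_2\}$ with $e_1 \neq e_2$ to justify $T_1\cup T_2 = S$) is a welcome but minor elaboration of the same proof.
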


\proof
Suppose not: suppose there exist sets $T_1$ and $T_2$ in the family such that $T_1,T_2\subset S$ and $|T_1|=|T_2|=|S|-1$. Then $T_1\cup T_2=S$, and so $S$ would have to be in the family as well since it is union-closed, a contradiction. 
\qed

\smallskip

\begin{lemma}\label{lem:missingcovering}
Let $\mathcal{S}$ be a union-closed family on $n$ elements, and let $\Sn\backslash \S=\{S_1, \ldots, S_k\}$. If $S_1\cup \ldots\cup S_k \supseteq \{e_1, \ldots, e_l\}\neq \emptyset$ for some $e_1, e_2, \ldots, e_l\in [n]$, then $k\geq l$.
 \end{lemma}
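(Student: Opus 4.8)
The plan is to prove the statement directly by exhibiting an injection $\phi\colon \{e_1,\dots,e_l\}\to\{S_1,\dots,S_k\}$ from the covered elements into the missing sets $\Sn\backslash\mathcal{S}$; producing such an injection immediately forces $l\le k$. Since each $e_i$ lies in the union $S_1\cup\cdots\cup S_k$, every $e_i$ belongs to at least one missing set, so at least one sensible value for $\phi(e_i)$ is always available, and the whole problem collapses to defining $\phi$ and verifying it is one-to-one.

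Concretely, I would define $\phi(e_i)$ to be a missing set containing $e_i$ of \emph{minimum cardinality} among all missing sets containing $e_i$ (breaking ties arbitrarily). This is well defined precisely because $e_i$ appears in some $S_j$. The entire content then lies in checking injectivity, and this is where Lemma~\ref{lem:missingsubsets} does the work. Suppose $\phi(e_i)=\phi(e_j)=S$ for some $i\ne j$; then $e_i,e_j\in S$ and $|S|\ge 2$. Deleting $e_i$ gives a set $S\setminus\{e_i\}$ of size $|S|-1$ that still contains $e_j$; if $S\setminus\{e_i\}$ were itself missing it would be a missing set containing $e_j$ of cardinality strictly smaller than $|S|=|\phi(e_j)|$, contradicting the minimality built into the choice of $\phi(e_j)$. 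Hence $S\setminus\{e_i\}\in\mathcal{S}$, and by the symmetric argument $S\setminus\{e_j\}\in\mathcal{S}$. These are two distinct subsets of $S$ of size $|S|-1$ that lie in $\mathcal{S}$, while $S$ itself is missing and $|S|\ge 2$ --- exactly the configuration forbidden by Lemma~\ref{lem:missingsubsets}. The contradiction establishes injectivity, hence $l\le k$.

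I expect the injectivity step, rather than any counting, to be the main obstacle: the delicate point is coupling the minimality of the chosen missing sets with Lemma~\ref{lem:missingsubsets}, and recognizing that union-closure is exactly what prevents a small family of missing sets from covering many elements. It is worth flagging why the minimum-\emph{cardinality} choice is the correct one rather than an inclusion-minimal or otherwise distinguished missing set: it is precisely the cardinality comparison $|S\setminus\{e_i\}|<|S|$ that lets the minimality hypothesis fire and route the argument into Lemma~\ref{lem:missingsubsets}.
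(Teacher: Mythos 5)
Your proof is correct, and it takes a genuinely different and noticeably shorter route than the paper's. The paper argues by induction on $l$: it picks a missing set $S^*$ of maximum cardinality, splits into cases according to $|S^*|$, builds an auxiliary union-closed family $\S'$ whose complement consists of the minimal missing sets not contained in $S^*$, applies the induction hypothesis to cover $\{e_1,\dots,e_l\}\setminus S^*$, and then adds in $S^*$ together with the $|S^*|-1$ missing subsets of size $|S^*|-1$ guaranteed by Lemma~\ref{lem:missingsubsets}. You replace all of this with a single explicit injection $e_i\mapsto\phi(e_i)$, where $\phi(e_i)$ is a minimum-cardinality missing set containing $e_i$; injectivity is exactly Lemma~\ref{lem:missingsubsets} applied to a common value $S=\phi(e_i)=\phi(e_j)$, since minimality forces both $S\setminus\{e_i\}$ and $S\setminus\{e_j\}$ into $\S$ while their union is the missing set $S$ of size at least $2$. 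Each step checks out: $\phi$ is well defined because each $e_i$ lies in some missing set, the dichotomy ``$S\setminus\{e_i\}$ is missing or in $\S$'' is legitimate because $\Sn=2^{[n]}$, and the final contradiction can equivalently be phrased directly via union-closure. What your approach buys is the elimination of the induction, of the three-way case analysis, and of the separate verification that the auxiliary family $\S'$ is union-closed; the paper's inductive structure does not appear to yield anything stronger in exchange, so your argument is a clean simplification of the same underlying idea.
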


\begin{proof}
We show this by induction on $l$. If $l=1$, then $\Sn\backslash \S$ cannot be empty, so $k\geq 1$. (Similarly, if $l=2$, then one cannot simply remove one set containing both $e_1$ and $e_2$ since then $\S$ would not be union-closed. Thus, $k\geq 2$.)

Now suppose this holds up to $l-1$, and we will show it for $l$.  Among $S_1, S_2, \ldots, S_k$, let $S^*$ be a set of maximum cardinality.

\textbf{Case 1:} Suppose  $2 \leq |S^*| \leq l-1$. Let $S_{i_1}, \ldots, S_{i_t} \in \Sn \backslash \S$ be such that $S_{i_j}$ contains no other nonempty set in $\Sn\backslash\S$ and $S_{i_j}\not\subseteq S^*$. Note that $S^*\cup S_{i_1}\cup \ldots \cup S_{i_t} \supseteq \{e_1, e_2, \ldots, e_l\}$ since any element $e_j$ is in at least one set of $\Sn\backslash\S$, and certainly a set $\bar{S}$ in $\Sn\backslash\S$ of minimum cardinality containing $e_j$ contains no other nonempty set in $\Sn\backslash\S$. Either we picked $\bar{S}$ or it is a subset of $S^*$; in both cases, $e_j$ will be in the union.  

By Lemma~\ref{lem:missingsubsets}, since $|S^*|\geq 2$, there are at least $|S^*|-1$ subsets of $S^*$ of size $|S^*|-1$ that are also in $\Sn \backslash \S$. Note that none of these subsets is a set that we kept since we did not keep any set that is a subset of $S^*$. 

Note that the collection of sets $S_{i_1}, \ldots, S_{i_t}$ is such that $$S_{i_1}\cup \ldots \cup S_{i_t} \supseteq \{e_1, e_2, \ldots, e_l\}\backslash S^*.$$ Furthermore, note that there is a union-closed family $\S'$ on $n$ elements such that $\Sn\backslash\S'=\{S_{i_1}, \ldots, S_{i_t}\}$. Indeed, it cannot be that there exists $T_1$ and $T_2$ in $\S'$ such that $T_1\cup T_2=S_{i_j}$ for some $1\leq j \leq t$. If either $T_1$ or $T_2$ had been in $\Sn\backslash \S$, then we would not have kept $S_{i_j}$ since $T_1$ and $T_2$ are subsets of that set. Then that means that $T_1$ and $T_2$ were both in $\S$, but then $\S$ would not have been union-closed since their union $S_{i_j}$ was not in $\S$. 

Thus, we can use the induction hypothesis to deduce that $t\geq l-|S^*|$. So we have found that there are at least these $l-|S^*|$ sets in $\Sn\backslash \S$, as well as $S^*$ itself and its $|S^*|-1$ subsets, for a total of $l$ sets as desired.

\textbf{Case 2:} Suppose $|S^*|=1$. Then all sets $S_1, \ldots, S_k$ are singletons (or the empty set), so to cover $l$ elements, one needs at least $l$ sets, so $k\geq l$ as desired. 

\textbf{Case 3:} Suppose $|S^*|=l$. By Lemma~\ref{lem:missingsubsets}, at least $l-1$ subsets of $S^*$ of cardinality $l-1$ are also in $\Sn\backslash\S$, meaning that there are also at least $l$ sets in $\Sn \backslash \S$ as desired.
\end{proof}

\begin{theorem}\label{thm:g}
The following holds: $g(n,2^n-i)=2^{n-1}$ for $0\leq i \leq n-1$.
\end{theorem}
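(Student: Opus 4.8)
The plan is to prove the two inequalities $g(n,2^n-i)\le 2^{n-1}$ and $g(n,2^n-i)\ge 2^{n-1}$ separately, with the lower bound carrying essentially all of the content through Lemma~\ref{lem:missingcovering}.

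For the upper bound, I would first observe that every union-closed family on the ground set $[n]$ is a subfamily of $\mathcal{S}_n=2^{[n]}$, and that in $\mathcal{S}_n$ each element belongs to exactly $2^{n-1}$ sets. Deleting sets can only decrease frequencies, so in any union-closed family the most frequent element lies in at most $2^{n-1}$ sets; this already gives $g(n,2^n-i)\le 2^{n-1}$, provided the minimum is taken over a nonempty collection of families. To guarantee that a union-closed family of exactly $2^n-i$ sets exists for each $0\le i\le n-1$, I would construct one explicitly by deleting $i$ singletons from $\mathcal{S}_n$: a singleton $\{e\}$ is never the union of two sets other than itself, so each such deletion preserves union-closedness, and there are $n\ge i$ singletons available.

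For the lower bound, let $\mathcal{S}$ be an arbitrary union-closed family with $|\mathcal{S}|=2^n-i$, so that $\mathcal{S}_n\backslash\mathcal{S}$ consists of $i\le n-1$ sets. The key observation is that an element $e$ has frequency strictly less than $2^{n-1}$ in $\mathcal{S}$ precisely when at least one set containing $e$ was deleted, that is, precisely when $e$ lies in the union of the sets of $\mathcal{S}_n\backslash\mathcal{S}$. If every element had frequency below $2^{n-1}$, then this union would contain all of $\{1,\dots,n\}$, and applying Lemma~\ref{lem:missingcovering} with $l=n$ would force $i\ge n$, contradicting $i\le n-1$. Hence some element of $[n]$ retains its full frequency $2^{n-1}$, giving $g(n,2^n-i)\ge 2^{n-1}$. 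Combining the two bounds yields the claim; in fact the argument shows that \emph{every} union-closed family of $2^n-i$ sets has maximum frequency exactly $2^{n-1}$.

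The genuine obstacle is concentrated in the lower bound, and specifically in Lemma~\ref{lem:missingcovering}: everything reduces to the statement that deleting fewer than $n$ sets from $\mathcal{S}_n$ cannot cover all $n$ ground elements while keeping the family union-closed. The remaining steps, namely the subfamily bound for the upper inequality and the explicit singleton construction for existence, are routine bookkeeping, and the edge case $i=0$ (where $\mathcal{S}=\mathcal{S}_n$ is forced) is handled automatically, since the empty deletion covers no elements.
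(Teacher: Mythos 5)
Your proposal is correct and its core — showing that if every element lost frequency then the deleted sets $\mathcal{S}_n\backslash\mathcal{S}$ would cover $[n]$, so Lemma~\ref{lem:missingcovering} with $l=n$ forces $i\geq n$ — is exactly the paper's argument. You are somewhat more thorough than the paper in also spelling out the matching upper bound and the existence of a union-closed family with exactly $2^n-i$ sets (via deleting singletons), points the paper leaves implicit.
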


\begin{proof}
We will show that for any union-closed family $\S$ of $m$ sets on $n$ elements where $m=2^n-i$ for some $0 \leq i \leq n-1$, there exists an element in $2^{n-1}$ sets. In other words, we will show that there is an element that is not in any of the sets in $\Sn\backslash \S$. 

By Lemma~\ref{lem:missingcovering}, if the sets in $\Sn\backslash\S$ covered $[n]$, there would have to be at least $n$ sets in $\Sn\backslash \S$. But we know that $|\Sn\backslash\S|=i$ for  some $0\leq i \leq n-1$. Thus the sets in $\Sn\backslash\S$ cannot cover $[n]$ and there is an element that is not in any of the sets in $\Sn\backslash\S$. 
\end{proof}

Similarly, one can show that $f(n,2^{n-1}-1)=2^n-n$. Note that it is clear that $f(n,2^{n-1})=2^n$ as one can take the whole power set of $n$.

\begin{theorem}
We have that $f(n,2^{n-1}-1)=2^n-n$ for all $n\in \mathbb{N}^+$.
\end{theorem}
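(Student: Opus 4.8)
The plan is to establish the two inequalities $f(n,2^{n-1}-1)\le 2^n-n$ and $f(n,2^{n-1}-1)\ge 2^n-n$ separately. The upper bound will follow almost immediately from Theorem~\ref{thm:g}, while the lower bound will come from an explicit construction obtained by puncturing the full power set.

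For the upper bound, I would start with an arbitrary union-closed family $\S$ on $[n]$ in which every element lies in at most $2^{n-1}-1$ sets, and set $m=|\S|$. Since $\S\subseteq 2^{[n]}$ we have $m\le 2^n$, so we may write $m=2^n-i$ with $i\ge 0$. The key move is to assume $m\ge 2^n-n+1$, i.e. $i\le n-1$, so that the index falls in the range $0\le i\le n-1$ covered by Theorem~\ref{thm:g}. That theorem then yields $g(n,m)=2^{n-1}$, so $\S$ must contain an element appearing in at least $2^{n-1}$ sets, contradicting the frequency bound $2^{n-1}-1$. Hence $m\le 2^n-n$, which gives $f(n,2^{n-1}-1)\le 2^n-n$.

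For the lower bound, I would exhibit a single family attaining the value, namely $\S=2^{[n]}\setminus\{\{1\},\{2\},\ldots,\{n\}\}$, obtained by deleting the $n$ singletons from the full power set. First I would check that $\S$ is union-closed: if $A,B\in \S$ but $A\cup B\notin \S$, then $A\cup B$ is one of the deleted singletons $\{e\}$, forcing $A,B\subseteq\{e\}$ and hence $A,B\in\{\emptyset,\{e\}\}$; since $\{e\}\notin\S$ this leaves $A=B=\emptyset$, whose union is $\emptyset\neq\{e\}$, a contradiction. Thus $\S$ is union-closed and $|\S|=2^n-n$. In the full power set each element $e$ lies in exactly $2^{n-1}$ sets, and the only deleted set containing $e$ is $\{e\}$, so in $\S$ each element lies in exactly $2^{n-1}-1$ sets. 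Therefore $\S$ is a valid competitor for $f(n,2^{n-1}-1)$, giving $f(n,2^{n-1}-1)\ge 2^n-n$ and, combined with the upper bound, the claimed equality.

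The argument is short because Theorem~\ref{thm:g} does the heavy lifting; the only points requiring care are matching its index range $0\le i\le n-1$ to the threshold $m=2^n-n$, and verifying union-closedness of the punctured power set. Neither is a genuine obstacle: the former is a direct arithmetic check, and the latter reduces to the observation that a singleton cannot be written as a union of two subsets other than itself. I do not expect any step to invoke the covering estimate of Lemma~\ref{lem:missingcovering} directly, since it is already packaged inside Theorem~\ref{thm:g}; indeed, an alternative route to the upper bound runs straight from Lemma~\ref{lem:missingcovering}, noting that fewer than $n$ deleted sets cannot cover $[n]$, so some element would retain its full frequency $2^{n-1}$.
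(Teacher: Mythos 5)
Your proof is correct and follows essentially the same route as the paper: the upper bound via Theorem~\ref{thm:g} and the lower bound via the power set with its $n$ singletons removed. The only difference is cosmetic --- the paper derives the contradiction through Proposition 12.5 of \cite{PRT} relating $f$ and $g$, whereas you unpack the definition of $g$ directly, which makes your version slightly more self-contained.
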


\begin{proof}
Suppose that $f(n,2^{n-1}-1)=2^n-n+k$ for some $k\in [n]$. Then, by Proposition 12.5 of \cite{PRT}, $g(n,2^n-n+k)=2^{n-1}-1$ which is a contradiction to \ref{thm:g}. Thus, we have that $f(n,2^{n-1}-1)\leq 2^n-n$.

Now consider the power set $\mathcal{S}_n$. Each element is in exactly $2^{n-1}$ sets. Remove the $n$ singletons, i.e., the $n$ sets containing exactly one element. The family one thus obtains is still union-closed, has $2^n-n$ sets, and each element is in $2^{n-1}-1$ sets. Therefore, we also have that $f(n,2^{n-1}-1)\geq 2^n-n$, and so the theorem holds.
\end{proof}

\bibliographystyle{alpha}
\bibliography{reufrankl}

\end{document}